\newtheorem{theorem}{Theorem}[section]
\newtheorem{corollary}[theorem]{Corollary}
\newtheorem{lemma}[theorem]{Lemma}
\newtheorem{proposition}[theorem]{Proposition}
\theoremstyle{definition}
\newtheorem{example}[theorem]{Example}
\theoremstyle{remark}
\newtheorem{remark}[theorem]{Remark}
\newcommand{\bP}{\mathbb P}
\newcommand{\bC}{\mathbb C}
\newcommand{\bZ}{\mathbb Z}
\newcommand{\norm}{|\!|}
\begin{document}
\title[]{Local Holomorphic Isometries of a Modified Projective Space into a Standard Projective Space; Rational Conformal Factors}
\author{Peter Ebenfelt}
\address{Department of Mathematics, University of California at San Diego, La Jolla, CA 92093-0112}
\email{pebenfel@math.ucsd.edu}
\thanks{The author was supported in part by the NSF grant DMS-1301282.}
\begin{abstract}
We consider local modifications $\omega_n+f^*\omega_d$ of the Fubini-Study metric (with associated $(1,1)$-form  $\omega_n$) on an open subset $\Omega\subset \bC\bP^n$ induced by a local holomorphic mapping $f\colon \Omega\to \bP^d$. Our main result is that there are "gaps" in potential dimensions $m$ such that the modification can be obtained as $h^*\omega_m$ for some local holomorphic mapping $h\colon \Omega\to \bC\bP^m$. We also consider the case of rational conformal factors.
\end{abstract}


\maketitle

\section{Introduction}

We shall consider complex projective space $\bC\bP^n$ equipped with the standard Fubini-Study metric, and we shall denote by $\omega_n$ its associated $(1,1)$-form. In affine coordinates $z=(z_1,\ldots, z_n)$ in an affine chart $\cong \bC^n\subset \bC\bP^n$, we have
\begin{equation}\label{e:FB}
\omega_n=\frac{\sqrt{-1}}{2\pi}\partial \bar\partial \log \left(1+\sum_{i=1}^n|z_i|^2\right)=\frac{\sqrt{-1}}{2\pi}\partial \bar\partial \log \left(1+\norm z\norm^2\right).
\end{equation}
Let $\Omega\subset \bC\bP^n$ be an open subset and $F\colon \Omega\to \bC\bP^d$ a holomorphic mapping. For a nonnegative real number $\lambda$, the $\lambda$-modification of the Fubini-Study metric (in $\Omega$) induced by this mapping is the metric whose associated $(1,1)$-form is given by $\omega_{n,F,\lambda}=\omega_n+\lambda F^*\omega_p$. The considerations in this paper are local, so we shall assume that $\Omega$ and $F(\Omega)$ are contained in affine charts of $\bC\bP^n$ and $\bC\bP^d$, respectively; thus, if we express $F$ in affine coordinates, $F(z)=[1:f(z)]$ with $f(z)=(f_1(z),\ldots, f_n(z))$, then $\omega_{n,f,\lambda}=\omega_{n,F,\lambda}$ is given by
\begin{equation}\label{e:FBmod}
\omega_{n,f,\lambda}:=\omega_n+\lambda\frac{\sqrt{-1}}{2\pi}\partial \bar\partial \log \left(1+\sum_{i=1}^p|f_i(z)|^2\right)=\omega_n+\lambda\frac{\sqrt{-1}}{2\pi}\partial \bar\partial \log \left(1+\norm f\norm^2\right).
\end{equation}
We shall further assume that there is a positive integer $m$, a positive real number $\mu$, and a holomorphic mapping $h\colon \Omega\subset \bC^n\to \bC^m\subset \bC\bP^m$ such that $\omega_{n,f,\lambda}=\mu h^*\omega_m$; i.e.,
\begin{equation}\label{e:modeq}
\omega_n=\mu h^*\omega_m-\lambda f^*\omega_d.
\end{equation}
This situation, but in a more general setting where the source is a general simply connected K\"ahler manifold and the target is a product of projective spaces, was considered in the recent paper \cite{HuangY13} by X. Huang, and Y. Yuan. They show that strong rigidity properties hold under suitable number theoretic conditions on the conformal factors $\mu$ and $\lambda$. In the restrictive setting considered here, their result would state that if there are no positive rational numbers $s,t$ such that $s\lambda=t\mu$, then $f$ and $h$ extend as global holomorphic immersions $f\colon \bC\bP^n\to \bC\bP^d$, $h\colon \bC\bP^n\to \bC\bP^m$, and furthermore, $f$ and $h$ are both conformal isometries (i.e., $f^*\omega_d=a\omega_n$, $h^*\omega_m=b\omega_n$) with integral conformal factors $a,b$ such that $1=a\mu-b\lambda$. The reader is referred to \cite{HuangY13} for a discussion of the relevance and general context of this problem.

In this paper, we shall consider in some sense the opposite case, where the conformal factors $\lambda$ and $\mu$ are rational numbers, in which case the number theoretic condition in \cite{HuangY13} of course fails. In this case, the rigidity properties established in \cite{HuangY13} also fail, as is pointed out in that paper: The mappings $f$ and $h$ do not extend as global mappings and they are not conformal isometries, in general. However, there are range estimates that hold for the rank of $h$, in general depending on the dimension $d$ of the modification as well as the conformal factors $\mu$ and $\lambda$. In the special case where the conformal factor $\lambda$ equals one, there are "gaps" in the range of possible ranks of $h$ such that the integers in these gaps cannot occur as the rank of an $h$ satisfying \eqref{e:modeq} for any $f$ or integral $\mu$. (This phenomenon is akin to the codimensional gaps that are predicted by the Huang-Ji-Yin Gap Conjecture \cite{HuangJiYin09} for CR mappings between spheres. Indeed, the underlying reasons are similar, in both cases boiling down to rank properties of certain sums of squares; see Section \ref{s:SOS}.)

We shall say that a mapping $G\colon \Omega\to \bC\bP^N$ is {\it minimally embedded} if the image $G(\Omega)$ is not contained in a proper projective plane. Since projective space equipped with the Fubini-Study metric is a homogeneous space, there is no loss of generality in assuming that $0\in \Omega$ and that $f(0)=h(0)=0$. In this case, $g$ being minimally embedded is equivalent to the components of $g=(g_1,\ldots, g_N)$, in affine coordinates near $0$, being linearly independent. We first state our result in the special case where the conformal factors $\lambda$ and $\mu$ are both one:

\begin{theorem}\label{thm:main0} Let $\Omega\subset \bC^n\subset\bC\bP^n$ be a connected open set, $f\colon\Omega\to \bC^d\subset \bC\bP^d$ a minimally embedded holomorphic mapping, and $\omega_{n,f}=\omega_n+f^*\omega_d$ the $1$-modification of the Fubini-Study metric $\omega_n$ induced by $f$. Then, there is a minimally embedded holomorphic mapping $h\colon \Omega\to \bC^m\subset \bC\bP^m$, unique up to multiplication by a unitary $m\times m$ matrix, such that $\omega_{n,f}=h^*\omega_m$ or, equivalently,
\begin{equation}\label{e:hmod0}
\omega_{n}=h^*\omega_m-f^*\omega_d,
\end{equation}
and the dimension $m$ satisfies the following:
\smallskip

{\rm (i)} If $d\leq n$, then
\begin{equation}\label{e:main0bound}
n+\sum_{l=0}^{d-1} (n-l)=n(d+1)-\frac{d(d-1)}{2}\leq m\leq dn+n+d=n(d+1)+d.
\end{equation}

{\rm (ii)} If $d\geq n$, then $m\geq \max(n(n+3)/2,d)$.
\end{theorem}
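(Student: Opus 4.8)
The plan is to convert the metric identity into a sum-of-squares identity, reduce $m$ to the dimension of an explicit finite-dimensional space of holomorphic germs at the origin, and then estimate that dimension through the structure of linear syzygies, using Macaulay's growth bound.

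First I would carry out the reduction. Writing $\omega_n$ and $f^*\omega_d$ as $\tfrac{\sqrt{-1}}{2\pi}\partial\bar\partial\log$ of $1+\|z\|^2$ and $1+\|f\|^2$, the relation $\omega_{n,f}=h^*\omega_m$ says that $\log\!\big((1+\|z\|^2)(1+\|f\|^2)/(1+\|h\|^2)\big)$ is pluriharmonic, i.e.\ (locally, and using $f(0)=h(0)=0$) that
\[
(1+\|z\|^2)(1+\|f(z)\|^2)=|E(z)|^2\,(1+\|h(z)\|^2)
\]
for a holomorphic $E$ with $E(0)=1$. The left-hand side is the Hermitian sum of squares of the germs $1$, $z_i$, $f_j$, $z_if_j$; such a representation is unique up to a unitary mixing of the germs, and its rank equals the dimension of their $\bC$-linear span. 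Normalizing the unitary so that one of the squared germs equals $1+O(|z|)$ and dividing the identity by it produces $E$ and a minimally embedded $h$ whose components form a basis of $\operatorname{span}\{z_i,f_j,z_if_j\}$ (essentially $z\mapsto(z,f(z))$ composed with the Segre map, cut down to its span); running the same normalization on two solutions shows $h$ is unique up to $U(m)$. The outcome is $m+1=\dim_{\bC}\operatorname{span}\{1,z_i,f_j,z_if_j\}$, and since $1$ lies outside the span of germs vanishing at $0$,
\[
m=\dim_{\bC}\operatorname{span}\{z_1,\dots,z_n,f_1,\dots,f_d,\ z_if_j\}=:\dim S .
\]
From here everything is a dimension count for $S$.

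For part (i) the upper bound is immediate: $S$ is spanned by $n+d+nd$ germs, so $\dim S\le n(d+1)+d$. For the lower bound put $W_1=\operatorname{span}\{z_i\}$, $V=\operatorname{span}\{f_j\}$, and let $\mathfrak m$ be the maximal ideal of germs vanishing at $0$. Since $V\subseteq\mathfrak m$ we have $W_1V\subseteq\mathfrak m^2$ while $W_1\cap\mathfrak m^2=0$, so $m=\dim S\ge n+\dim(W_1V)=n+nd-\dim\Sigma$, where $\Sigma=\{(g_1,\dots,g_n)\in V^n:\sum_i z_ig_i=0\}$ is the space of linear syzygies of $V$. It therefore suffices to prove $\dim\Sigma\le\binom d2$ when $d\le n$. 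I would filter $V$ by $\mathfrak m$-adic order: letting $W_k$ denote the space of degree-$k$ forms, let $\tilde V_k\subseteq W_k$ be the space of degree-$k$ leading forms of the elements of $V\cap\mathfrak m^k$, with $r_k=\dim\tilde V_k$, so $\sum_k r_k=\dim V=d$ and each $r_k\le d\le n$. Passing to degree-$k$ parts sends the elements of $\Sigma$ whose entries have order $\ge k$ onto linear syzygies of $\tilde V_k$, with kernel the elements whose entries have order $\ge k+1$; hence $\dim\Sigma\le\sum_k\#\{\text{linear syzygies of }\tilde V_k\}$. Now for any $U\subseteq W_k$ with $\dim U=t\le n$ the number of linear syzygies is $nt-\dim(W_1U)\le\binom t2$: for $k=1$ this is the Koszul identity $\{(p_i)\in U^n:\sum z_ip_i=0\}=\wedge^2U$, and for arbitrary $k$ it follows from Macaulay's growth theorem, the extremal lex-segment being $U=z_1^{k-1}\operatorname{span}\{z_1,\dots,z_t\}$, for which $W_1U=z_1^{k-1}\operatorname{span}\{z_iz_j:i\le n,\ j\le t\}$ has dimension exactly $nt-\binom t2$. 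Together with the superadditivity $\binom a2+\binom b2\le\binom{a+b}2$ this gives $\dim\Sigma\le\sum_k\binom{r_k}{2}\le\binom d2$, hence $m\ge n+nd-\binom d2=n+\sum_{l=0}^{d-1}(n-l)$, as required.

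For part (ii), $m\ge\dim V=d$ is immediate from minimal embeddedness. For $m\ge n(n+3)/2=n+\binom{n+1}2$ I would use $m=\dim S=n+\dim(S\cap\mathfrak m^2)$ (the linear part of $S$ is all of $W_1$) and show $\dim(S\cap\mathfrak m^2)\ge\binom{n+1}2$: if the linear parts of $f_1,\dots,f_d$ span $W_1$ then $W_1V\subseteq S\cap\mathfrak m^2$ already has degree-$2$ leading forms filling all of $W_2$ (the $t=n$ case above), giving the bound at once; if they span only an $r_1<n$ dimensional subspace, then $\dim V=d\ge n$ forces at least $n-r_1$ further independent directions in $V$ beyond $\tilde V_1$, and one tracks, order by order, how these together with the products $z_if_j$ populate $S\cap\mathfrak m^2$ and its higher-order subspaces — the argument of the previous paragraph run one degree deeper, with a short direct check in the smallest cases. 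The main obstacle throughout is exactly this phenomenon of degenerate leading forms: when the $f_j$ have linearly dependent or vanishing leading parts the count at the lowest order undershoots the target and one is forced into the order-by-order bookkeeping, whose engine is the homogeneous syzygy inequality $\dim(W_1U)\ge nt-\binom t2$ for $t\le n$ and the superadditivity of $\binom{\cdot}{2}$. Establishing that homogeneous inequality in the needed generality — arbitrary degree $k$, any $t\le n$ — is where the substantive work sits; Macaulay's theorem is what makes it go through cleanly, and the superadditivity is what makes the recursion close.
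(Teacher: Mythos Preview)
Your reduction to the identity $(1+\|z\|^2)(1+\|f\|^2)=1+\|h\|^2$ and hence to $m=\dim_{\bC}\operatorname{span}\{z_i,f_j,z_if_j\}$ is the same as the paper's (the detour through $E$ is unnecessary---once $f(0)=h(0)=0$, the pluriharmonic term has no constant or pure terms and must vanish---but it does no harm). The split $m\ge n+\dim(W_1V)$ is also exactly what the paper uses in its Theorem~2.5.

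Where you diverge is in bounding $\dim(W_1V)$. The paper homogenizes the $f_j$ to polynomials $F_j$ in $n+1$ variables, invokes the Grundmeier--Halfpap estimate (their Proposition~2.1, itself a consequence of Macaulay) to get a lower bound on the rank of $\|Z\|^2\|F\|^2$, and then dehomogenizes; the passage from germs to polynomials is handled by Taylor truncation. You instead stay with the germs, filter $V$ by $\mathfrak m$-adic order, bound the syzygies of each graded piece $\tilde V_k$ by $\binom{r_k}{2}$ via Macaulay (this is exactly the content of Proposition~2.1 in $n$ variables, applied degree by degree), and reassemble using the superadditivity $\sum_k\binom{r_k}{2}\le\binom{d}{2}$. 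This is a legitimate and somewhat more direct alternative: no homogenization or truncation, at the cost of the filtration bookkeeping. The underlying engine---Macaulay's growth bound---is the same in both.

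For part~(ii) your argument is incomplete. The case where the linear parts of the $f_j$ span $W_1$ is fine, but the degenerate case $r_1<n$ is only sketched (``one tracks, order by order\ldots with a short direct check in the smallest cases''), and it is not clear the recursion closes as stated once you must also account for the contribution of $V\cap\mathfrak m^2$ to $S\cap\mathfrak m^2$. The clean fix---and this is what the paper does---is simply to choose any $n$-dimensional subspace $V'\subseteq V$ (possible since $d\ge n$) and apply your own part~(i) argument with $V'$ in place of $V$: then $\dim(W_1V)\ge\dim(W_1V')\ge n^2-\binom{n}{2}=\binom{n+1}{2}$, giving $m\ge n+\binom{n+1}{2}=n(n+3)/2$ with no case analysis.
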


\begin{remark}{\rm Observe that, for fixed $n$, the function $d\mapsto n(d+1)-\frac{d(d-1)}{2}$ is strictly increasing in $d$ for $1\leq d\leq n$.
}
\end{remark}

The existence of the mapping $h$ is trivial in this case (see the proof of Theorem \ref{thm:main0} below), and the uniqueness is a consequence of a well known lemma by D'Angelo \cite{D'Angelobook}. The main point of the theorem is the range estimates in (i) on the dimension $m$ for low dimensional ($d\leq n$) modifications. We note that there are "gaps" in the range of possible dimensions $m$ that can occur as target dimensions for $h$, {\it regardless} of the modification (i.e., regardless of the modifying mapping $f$ and dimension $d$). For instance, if $d=1$, then $2n\leq m\leq 2n+1$. If $d=2$, then $3n-1\leq m\leq 3n+2$, etc. As $d$ grows towards $n$, these possible ranges of $m$ will initially be disjoint, but the "gaps" between them will shrink until eventually (when $d\sim \sqrt{2n}$) they disappear. The gap intervals of dimensions $m$ for which {\it no} minimally embedded $h\colon \Omega\to \bC^m\subset \bC\bP^m$ is isometric to a $1$-modification induced by any $f$ go as follows (until they disappear):
\begin{equation}\label{e:1gaps}
(0,2n),\ (2n+1,3n-1),\ (3n+2,4n-3),\ \ldots,
\end{equation}
with the $d$th gap being given by $(n(d+1)+d,n(d+2)-d(d+1)/2)$, which as the reader can readily verify becomes empty when $d$ is sufficiently large, $d\sim \sqrt{2n}$ as mentioned above. The estimate provided in (ii) is of less interest. It will become very poor as the dimension $d$ of the modification grows; for generic choices of $f$ the dimension $m$ will grow on the order of the right hand side of \eqref{e:main0bound}. We formulate the gap result as a corollary:

\begin{corollary}\label{cor:maingap} Let $\Omega\subset \bC^n\subset\bC\bP^n$ be a connected open set, $h\colon\Omega\to \bC^m\subset \bC\bP^m$ a minimally embedded holomorphic mapping such that
\begin{equation}\label{e:gapd}
n(k+1)+k<m<n(k+2)-\frac{k(k+1)}{2},
\end{equation}
for some $k$. Then, $h^*\omega_m$ is not the $1$-modification $w_{n,f}=w_n+f^*\omega_d$ for any $f\colon\Omega\to \bC^d\subset \bC\bP^d$.
\end{corollary}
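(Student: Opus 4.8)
The plan is to read the statement off from Theorem~\ref{thm:main0} together with the monotonicity recorded in the Remark. Suppose, to the contrary, that $h^*\omega_m=\omega_{n,f}=\omega_n+f^*\omega_d$ for some holomorphic $f\colon\Omega\to\bC^d\subset\bC\bP^d$; since the Fubini--Study metric is homogeneous we may normalize so that $0\in\Omega$ and $f(0)=h(0)=0$.

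The first step is to reduce to the case where $f$ is itself minimally embedded. Write $f=(f_1,\dots,f_d)$ in affine coordinates, and let $V$ be the $\bC$-linear span of $f_1,\dots,f_d$ in the space of holomorphic functions on $\Omega$, with $d':=\dim_\bC V\le d$; every element of $V$ vanishes at $0$. Fixing a basis $g=(g_1,\dots,g_{d'})$ of $V$, we have $f=Ag$ for a $d\times d'$ matrix $A$ of full rank $d'$, so $A^*A$ is positive definite and may be written $A^*A=B^*B$ with $B$ an invertible $d'\times d'$ matrix. Then $\norm f\norm^2=\norm Ag\norm^2=\norm Bg\norm^2$, so by \eqref{e:FBmod} we get $f^*\omega_d=\tilde f^*\omega_{d'}$, where $\tilde f:=Bg\colon\Omega\to\bC^{d'}\subset\bC\bP^{d'}$; moreover $\tilde f$ is minimally embedded (its components again form a basis of $V$) and $\tilde f(0)=0$. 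Replacing $f$ by $\tilde f$ and $d$ by $d'$, we may therefore assume from now on that $f$ is minimally embedded into $\bC^d$.

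Next I would apply Theorem~\ref{thm:main0} to the $1$-modification $\omega_{n,f}$: it produces a minimally embedded $h'\colon\Omega\to\bC^{m'}\subset\bC\bP^{m'}$, unique up to a unitary matrix, with $(h')^*\omega_{m'}=\omega_{n,f}$, and $m'$ lies in the range described by (i)--(ii). Since $h$ is also minimally embedded and $h^*\omega_m=\omega_{n,f}$, the uniqueness clause (D'Angelo's lemma) gives $m=m'$, so $m$ satisfies those same bounds, with $d$ now the minimal embedding dimension of $f$. Put $\ell(j):=n(j+1)-j(j-1)/2$ and $u(j):=n(j+1)+j$; thus Theorem~\ref{thm:main0} yields $\ell(d)\le m\le u(d)$ when $d\le n$, and $m\ge\ell(n)=n(n+3)/2$ when $d\ge n$. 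Note that $u$ is strictly increasing in $j\ge 0$, while $\ell$ is strictly increasing on $\{0,1,\dots,n\}$ by the Remark.

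It remains an elementary bookkeeping argument. The hypothesis $n(k+1)+k<m<n(k+2)-k(k+1)/2$ says precisely $u(k)<m<\ell(k+1)$; in particular $u(k)<\ell(k+1)$, which simplifies to $k(k+3)/2<n$ and hence forces $k\le n-1$ (if $k\ge n$ then $k(k+3)/2\ge n(n+3)/2>n$), so that $k+1\le n$ and $\ell(k+1)$ is given by (i). If $d\ge n$, then $m\ge\ell(n)\ge\ell(k+1)$ by monotonicity, contradicting $m<\ell(k+1)$. If $d\le n$, then $m<\ell(k+1)\le\ell(d)$ excludes $d\ge k+1$, while $m>u(k)\ge u(d)$ excludes $d\le k$; no value of $d$ remains. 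In every case we reach a contradiction, so no $f$ with $h^*\omega_m=\omega_{n,f}$ can exist. The argument is essentially a translation of Theorem~\ref{thm:main0}; the only steps that need any care are the reduction to a minimally embedded $f$ (so that both the existence and, crucially, the uniqueness parts of the theorem apply) and the verification that the intervals $[\ell(d),u(d)]$ are ordered so as to leave exactly the asserted gaps — both of which I expect to be straightforward.
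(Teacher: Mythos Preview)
Your proof is correct and follows the same route as the paper: the corollary is stated there as an immediate consequence of Theorem~\ref{thm:main0} and the gap discussion preceding it, with no separate proof given. Your write-up simply spells out that deduction carefully, including the reduction to a minimally embedded $f$ (needed because the corollary quantifies over \emph{all} $f$ while Theorem~\ref{thm:main0} assumes minimal embedding) and the elementary monotonicity check that the intervals $[\ell(d),u(d)]$ miss exactly the claimed gaps; both of these are left implicit in the paper.
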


In order to formulate a result with general, rational conformal factors $\lambda$ and $\mu$, we need to introduce some terminology and notation. Let $\phi=(\phi_1,\ldots,\phi_a)$ and $\psi=(\psi_1,\ldots, \psi_b)$ be holomorphic mappings $\Omega\to \bC^a\subset \bC\bP^{a}$ and $\Omega\to \bC^b\subset \bC\bP^{b}$, respectively. The {\it tensor product} $\phi\otimes \psi$ is defined to be the mapping $\Omega\to \bC^{ab}\subset \bC\bP^{ab}$ whose components are $\phi_i\psi_j$ as $i$ and $j$ run over the sets $\{1,\ldots,a\}$ and $\{1,\ldots,b\}$, respectively, in some predetermined ordering of pairs $(i,j)$. The notation $\phi^{\otimes k}$ denotes the tensor product of $\phi$ with itself $k$ times. The {\it rank} of a holomorphic mapping $\phi\colon \Omega\to \bC^a\subset \bC\bP^{a}$ is the smallest integer $r$ such that the image $\phi(\Omega)$ is contained in an affine complex plane (or, equivalently, projective plane if considered as a mapping into $\bC\bP^{a}$) of dimension $r$; in particular, $\phi$ is minimally embedded in $\bC\bP^a$ if and only if the rank is $a$.

\begin{theorem}\label{thm:main1} Let $\Omega\subset \bC^n\subset\bC\bP^n$ be a connected open set, $f\colon\Omega\to \bC^d\subset \bC\bP^d$ a minimally embedded holomorphic mapping, and $a,b,c$ positive integers without a common prime factor. Let $\omega_{n,f, c/b}=\omega_n+(c/b) f^*\omega_d$ be the $c/b$-modification of the Fubini-Study metric $\omega_n$ induced by $f$. Assume that there is a minimally embedded holomorphic mapping $h\colon \Omega\to \bC^m\subset \bC\bP^m$ such that $\omega_{n,f,c/b}=(a/b)h^*\omega_m$ or, equivalently,
\begin{equation}\label{e:hmodrat}
\omega_{n}=\frac{a}{b}h^*\omega_m-\frac{c}{b}f^*\omega_d.
\end{equation}
If $(1,f)^{\otimes c}$ has rank $e+1$, then
\begin{equation}\label{e:ftensorrk}
cd\leq e\leq \sum_{k=1}^c\binom{d+k-1}{k}
\end{equation}
and the following hold:
\smallskip

{\rm (i)} If $e\leq n$ and $b=1$, then the following two inequalities hold:
\begin{equation}\label{e:mest1}
\left\{
\begin{aligned}
n(e+1)-\frac{e(e-1)}{2}\leq& \sum_{k=1}^a\binom{m+k-1}{k}\\
am\leq& n(e+1)+e.
\end{aligned}
\right.
\end{equation}

{\rm (ii)} If $e\geq n$ or $b\geq 2$, then
\begin{equation}\label{e:mest2}
\frac{n(n+3)}{2}\leq \sum_{k=1}^a\binom{m+k-1}{k}.
\end{equation}
\end{theorem}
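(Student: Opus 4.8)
We outline the argument. Since the Fubini--Study metric is homogeneous, there is no loss of generality in assuming $0\in\Omega$ and $f(0)=h(0)=0$. Multiplying \eqref{e:hmodrat} by $b$ and passing to local K\"ahler potentials shows that $b\log(1+\norm z\norm^2)+c\log(1+\norm f\norm^2)-a\log(1+\norm h\norm^2)$ is pluriharmonic, hence equals $\log|p(z)|^2$ for a zero-free holomorphic $p$. Using $\|v^{\otimes k}\|^2=\|v\|^{2k}$ to rewrite $(1+\norm z\norm^2)^b=\|(1,z)^{\otimes b}\|^2$ and likewise for the other factors, this becomes
$$\bigl\|(1,z)^{\otimes b}\otimes(1,f)^{\otimes c}\bigr\|^2=\bigl\|p\cdot(1,h)^{\otimes a}\bigr\|^2 .$$
By D'Angelo's lemma \cite{D'Angelobook}, two holomorphic maps with equal squared norms have unitarily equivalent minimal reductions, so in particular the spans of their components have the same dimension. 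Let $e+1$ be the rank of $(1,f)^{\otimes c}$ (as in the statement) and $\widehat m+1$ the rank of $(1,h)^{\otimes a}$, and choose minimally embedded maps $(1,\widetilde f)\colon\Omega\to\bC^e\subset\bC\bP^e$ and $(1,\widehat h)\colon\Omega\to\bC^{\widehat m}\subset\bC\bP^{\widehat m}$, with $\widetilde f(0)=\widehat h(0)=0$, such that $\|(1,\widetilde f)\|^2=\|(1,f)^{\otimes c}\|^2$ and $\|(1,\widehat h)\|^2=\|(1,h)^{\otimes a}\|^2$. Substituting these into the displayed identity and applying $\partial\bar\partial\log$ yields
$$b\,\omega_n=\widehat h^{\,*}\omega_{\widehat m}-\widetilde f^{\,*}\omega_e .$$

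Next I would record the rank inequalities. The components of $(1,f)^{\otimes c}$ are among the $\binom{d+c}{c}$ monomials of degree $\le c$ in $f_1,\dots,f_d$ (together with $1$), so $e+1\le\binom{d+c}{c}$, which is the right-hand inequality in \eqref{e:ftensorrk}; the left-hand inequality $cd\le e$ asserts that the span of those monomials has dimension at least $cd+1$, which I will address below. Applying these same two bounds to $h$ (minimally embedded in $\bC\bP^m$) with the power $a$ gives $am+1\le\widehat m+1\le\binom{m+a}{a}$, i.e.
$$am\le\widehat m\le\sum_{k=1}^a\binom{m+k-1}{k},$$
using the identity $\sum_{k=0}^a\binom{m+k-1}{k}=\binom{m+a}{a}$. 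These two bounds are precisely the left side of the second line of \eqref{e:mest1} and the right sides of the first line of \eqref{e:mest1} and of \eqref{e:mest2}, so it remains only to bound $\widehat m$ in terms of $n$ and $e$.

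For this I would split into cases according to the hypotheses. If $b=1$, then $b\,\omega_n=\widehat h^{\,*}\omega_{\widehat m}-\widetilde f^{\,*}\omega_e$ is exactly a $1$-modification as in \eqref{e:hmod0}, with $(\widetilde f,e,\widehat h,\widehat m)$ in the role of $(f,d,h,m)$, and $\widehat h$ is, by the uniqueness in Theorem \ref{thm:main0}, the minimally embedded map occurring there. If moreover $e\le n$, Theorem \ref{thm:main0}(i) gives $n(e+1)-e(e-1)/2\le\widehat m\le n(e+1)+e$; combining with $am\le\widehat m\le\sum_{k=1}^a\binom{m+k-1}{k}$ yields the two inequalities of \eqref{e:mest1}. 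If $b=1$ and $e\ge n$, Theorem \ref{thm:main0}(ii) gives $\widehat m\ge n(n+3)/2$, hence \eqref{e:mest2}. Finally, if $b\ge2$, I argue directly: since $b\ge2$, the components of $(1,z)^{\otimes b}$ already include all monomials in $z_1,\dots,z_n$ of degree $\le2$, so the span of the components of $(1,z)^{\otimes b}\otimes(1,\widetilde f)$ — whose dimension is $\widehat m+1$ by the norm identity and D'Angelo's lemma — contains the space of polynomials of degree $\le2$ in $z$, of dimension $\binom{n+2}{2}=n(n+3)/2+1$; thus $\widehat m\ge n(n+3)/2$, giving \eqref{e:mest2} once more.

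The one genuinely nontrivial ingredient — and the step I expect to be the main obstacle — is the lower bound $cd\le e$, equivalently: if $1,g_1,\dots,g_d$ are linearly independent holomorphic functions on the connected set $\Omega$, then the monomials of degree $\le c$ in $g_1,\dots,g_d$ span a space of dimension at least $cd+1$. Since the Zariski closure of $(1,g)(\Omega)$ is a non-degenerate projective variety of positive dimension, this follows from the classical Castelnuovo-type lower bound on the Hilbert function of a non-degenerate variety; alternatively, and more in the spirit of the rest of the paper, it should be extracted from the rank estimates for sums of squares developed in Section \ref{s:SOS}, which also underlie the lower bound inside Theorem \ref{thm:main0}. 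Everything else is bookkeeping with D'Angelo's lemma, the binomial identity above, and Theorem \ref{thm:main0}.
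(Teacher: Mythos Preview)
Your outline is correct and follows essentially the same route as the paper: reduce \eqref{e:hmodrat} to the SOS identity $(1+\norm z\norm^2)^b(1+\norm f\norm^2)^c=(1+\norm h\norm^2)^a$, replace $(1,f)^{\otimes c}$ and $(1,h)^{\otimes a}$ by minimal maps of ranks $e+1$ and $\widehat m+1$, then invoke the SOS rank estimates of Section~\ref{s:SOS} (Theorem~\ref{thm:nonhomoSOS} via Theorem~\ref{thm:main0}, and Proposition~\ref{prop:powerSOS} for the bounds $cd\le e$ and $am\le\widehat m$, which is indeed the nontrivial ingredient you flagged). The only substantive difference is your treatment of $b\ge 2$: the paper peels off one factor $(1+\norm z\norm^2)$ and iterates Theorem~\ref{thm:nonhomoSOS}, whereas your direct observation that the span of $(1,z)^{\otimes b}\otimes(1,\tilde f)$ already contains all monomials of degree $\le 2$ is a clean shortcut to the same bound $\widehat m\ge n(n+3)/2$.
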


We note that if $a=1$ in Theorem \ref{thm:main1}, then the estimates on $m$ are of the same type as those of Theorem \ref{thm:main0}, with the rank $e$ playing the role of the dimension $d$ in the latter theorem.
If $a\geq 2$, then the existence of the mapping $h$ has to be assumed, as it may not exist in general. We also note that in this case the sum on the right in the first inequality of \eqref{e:mest1} and on the right in \eqref{e:mest2} is a polynomial of degree $a$ in $m$ with positive coefficients. Consequently, the lower bound on $m$ provided by these two estimates will be roughly on the order of the $a$th root of the left hand sides. In the case (i) in Theorem \ref{thm:main1}, this means (as the reader can verify) that the possible intervals of $m$ provided by \eqref{e:mest1} will in general not be disjoint for different values of $e\geq cd$, as is the case in Theorem \ref{thm:main0}. Thus, the gaps in possible values of $m$, regardless of the modification, that exist when the conformal factors are both one, cannot be predicted (although they may still exist) for general rational conformal factors by Theorem \ref{thm:main1}, {\it except} for the first gap that exists for sufficiently small values of $a$ (compared to the dimension $n$): For fixed $c\geq 1$, we have $e\geq c$ and hence \eqref{e:mest1} and \eqref{e:mest2} imply that
$$
n(c+1)-\frac{c(c-1)}{2}\leq \sum_{k=1}^a\binom{m+k-1}{k}.
$$
(If $c\geq n$, we replace the left hand side by $n(n+3)/2$, but let us assume here that $c\leq n$.)
Observe that if $m=1$, then the right hand side equals $a$. It follows, regardless of the modification, that if $a<n(c+1)-c(c-1)/2$, then $m\geq 2$. Similarly, if $m=2$, then the right hand side equals $(a+1)(a+2)/2 -1$ and, hence, if $(a+1)(a+2)/2 -1<n(c+1)-c(c-1)/2$, then $m\geq 3$, etc. In general, Theorem \ref{thm:main1} should be regarded as estimates on $m$ for a given rank $e$, but estimates that do not depend on the modifying mapping $f$ itself. As above, the main point is the estimates for low ranks $e\leq n$. We should point out, however, that the gap phenomenon described in Corollary \ref{cor:maingap} holds for the possible {\it ranks} of $(1,h)^{\otimes a}$. This follows directly from the proofs of Theorems \ref{thm:main0} and \ref{thm:main1}.

\begin{remark}\label{rem:bad}{\rm If $a\geq 2$, then the lower bounds provided by Theorem \ref{thm:main1} are at best $m\geq n$, and in general worse than this. To see this, observe that in the case $a\geq 2$, if
\begin{equation}\label{e:bestbound}
\frac{n(n+3)}{2}\leq m+\frac{m(m+1)}{2},
\end{equation}
then the inequalities for lower bounds on $m$ in \eqref{e:mest1} and \eqref{e:mest2} hold. Thus, any lower bound $m\geq A$ that follows from Theorem \ref{thm:main1} is implied by the lower bound that follows from \eqref{e:bestbound}, and the reader can readily verify that this bound is precisely $m\geq n$.
}
\end{remark}

If $f$ in Theorem \ref{thm:main1} is assumed to be a rational mapping, then one can show (using Huang's Lemma; see below) that in fact $m\geq n$. Thus, in view of Remark \ref{rem:bad}, for rational mappings, the lower bounds in Theorem \ref{thm:main1} do not yield any new information beyond $m\geq n$. It is also possible, in this case, that estimates that are {\it linear} in $m$ could be proved, if further progress is made on the SOS problem (see Section \ref{s:SOS}) in the general situation. The case where $f$ is rational is discussed in Section \ref{s:polyf}.

Standard arguments will reduce the proofs of Theorems \ref{thm:main0} and \ref{thm:main1} to statements about ranks of certain sums of squares. (A reader unfamiliar with these standard arguments might want to read Section \ref{s:proofs} before reading Section \ref{s:SOS}.) The results concerning the latter that are needed for the proofs are stated and proved in Section \ref{s:SOS}. The proofs of Theorems \ref{thm:main0} and \ref{thm:main1} are then given in Section \ref{s:proofs}. A discussion of the case where the modifying map is rational is conducted in Section \ref{s:polyf}. Some examples are also given in the latter section.

The connection between results concerning sums of squares and isometric embedding problems has also been explored in, e.g., \cite{CatlinJPD97}, \cite{CatlinJPD99}.

\section{Ranks of Sums of Squares}\label{s:SOS}

In this section, we shall consider some Sums of Squares (SOS) problems that arise in the context of holomorphic mappings between projective spaces with (modified) Fubini-Study metrics. The (standard) connection will be made in Section \ref{s:proofs}.

\subsection{Setup and Basics} We shall first consider the following Sums of Squares (SOS) Equation in $z=(z_1,\ldots,z_n)\in \bC^n$:
\begin{equation}\label{e:SOS}
\left(\sum_{j=1}^n|z_j|^2\right)a(z,\bar z)=\sum_{k=1}^m|h_k(z)|^2,
\end{equation}
where $a(z,\bar z)$ is a real-analytic, Hermitian function; $h=(h_1,\ldots,h_m)$ is a local (germ at $0$ of a) holomorphic mapping $(\bC^n,0)\to (\bC^m,0)$ whose components are linearly independent over $\bC$ (or, equivalently, whose image is not contained in a proper subspace of $\bC^m$). For brevity, we shall also use the notation
$$
\norm z\norm^2:=\sum_{j=1}^n|z_j|^2,\quad \norm h\norm^2=\norm h(z)\norm^2:=\sum_{k=1}^m|h_k(z)|^2;
$$
the dimension of the complex space whose Euclidian norm is used will be clear from the context. Thus, equation \eqref{e:SOS} can then be written
\begin{equation}\label{e:SOS'}
\norm z\norm^2 a(z,\bar z)=\norm h\norm^2.
\end{equation}
If the Hermitian function $a(z,\bar z)$ is a polynomial, then it can then be written as a difference of finite squared norms:
\begin{equation}\label{e:DOS}
a(z,\bar z)=\sum_{i=1}^p|f_i(z)|^2-\sum_{j=1}^q|g_j(z)|^2=\norm f\norm^2-\norm g\norm^2,
\end{equation}
where $f=(f_1,\ldots, f_p)$, $g=(g_1,\ldots,g_q)$ are polynomial mappings. If $a$ is real-analytic but not polynomial, then a similar decomposition can be achieved with (in general, infinite dimensional) Hilbert space valued $f$ and $g$. In what follows, we shall assume that $a$ can be decomposed as a difference of finite squared norms (as in \eqref{e:DOS}), which is always the case if $a$ is polynomial. When the components of $f$ and $g$ are linearly independent (as can always be achieved), then the pair $(p,q)$ is called the {\it rank} of $a$. If $q=0$ (meaning that \eqref{e:DOS} can be achieved with $g\equiv 0$), then $a$ is said to be a (finite) SOS and we will simply refer to $p$ (rather than $(p,0)$) as its rank; thus, e.g., $\norm h\norm^2$ above is a finite SOS of rank $m$. A fundamental problem of general interest (and whose solution would have direct implications for the Huang-Ji-Yin Conjecture in CR geometry mentioned in the introduction) can be described as follows:

\medskip
\noindent
{\bf SOS Problem.} {\it Let $a(z,\bar z)$ be a Hermitian real-analytic function in neighborhood of $0$ in $\bC^n$
and assume that $\norm z\norm^2 a(z,\bar z)$ is a finite {\rm SOS}, i.e., there exists a holomorphic mapping $h=(h_1,\ldots, h_m)$
satisfying \eqref{e:SOS'}. Relate the possible values of the rank $m$ of the {\rm SOS} $\norm h\norm^2$ to the rank $(p,q)$ of $a$ and the dimension $n$.}
\medskip

The only general result known, to the best of the author's knowledge, about the SOS Problem is Huang's Lemma \cite{Huang99}, which states that if $a(z,\bar z)$ is not identically zero, then the rank $m\geq n$. In this paper, we shall only consider the SOS problem in the special case where $a(z,\bar z)$ itself is an SOS.

In what follows, we shall also use the following notation: Let $F=(F_1,\ldots,F_a)$ and $G=(G_1,\ldots, G_b)$ be local holomorphic mappings $(\bC^n, 0)\to \bC^a$ and $(\bC^n, 0)\to \bC^b$, respectively. Then, $F\oplus G$ denotes the mapping $(\bC^n,0)\to \bC^{a+b}$ given by $F\oplus G:=(F,G)$, and $F\otimes G$ the mapping $(\bC^n,0)\to \bC^{ab}$ whose components are $F_iG_j$ as $i$ and $j$ run over the sets $\{1,\ldots,a\}$ and $\{1,\ldots,b\}$, respectively, in some predetermined ordering of pairs $(i,j)$. We observe immediately that
\begin{equation}
\norm F\oplus G\norm^2=\norm F\norm^2+\norm G\norm^2;\quad \norm F\otimes G\norm^2=\norm F\norm^2\norm G\norm^2.
\end{equation}
We shall use the notation $V_F\subset \bC\{z\}\cong \mathcal O_n$ for the vector space over $\bC$ spanned by the components of $F$. Clearly, the rank of the SOS $\norm F\norm^2$ equals the dimension of $V_F$.

\subsection{The SOS problem when $a(z,\bar z)$ is an SOS} The following result concerning the case when $a$ is a bihomogeneous Hermitian polynomial SOS was proved in
\cite{GrHa13}
(Proposition 3) using an estimate by Macauley on the growth of the Hilbert function of a homogeneous polynomial ideal:

\begin{proposition}\label{prop:GrHa13}{\rm (\cite{GrHa13})}
Let $A(Z,\bar Z)$ be a bihomogeneous Hermitian polynomial in $Z=(Z_0, Z_1,\ldots,Z_n)$ and $\bar Z$, and assume that $A(Z,\bar Z)$ is an SOS of rank $p$, i.e.,
$$
A(Z,\bar Z)=\sum_{i=1}^p |F_i(Z)|^2,
$$
where $F_1(Z), \ldots, F_p(Z)$ are linearly independent homogeneous polynomials. If $p\leq n+1$, then the rank $R$ of the SOS $\norm Z\norm^2A(Z,\bar Z)$ satisfies
\begin{equation}\label{e:GrHa13}
\sum_{l=0}^{p-1} (n+1-l)=(n+1)p-\frac{p(p-1)}{2}\leq R\leq p(n+1),
\end{equation}
and if $p\geq n+1$, then $R\geq (n+1)(n+2)/2$.
\end{proposition}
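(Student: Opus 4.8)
The plan is to recast the Sums-of-Squares statement as a question about Hilbert functions of homogeneous ideals and then apply Macaulay's growth estimate. Since $A$ is Hermitian and bihomogeneous while $\sum_{i=1}^p|F_i|^2$ contains no cross terms, the $F_i$ must all be homogeneous of one common degree $k$ (a positive semidefinite sum of pure bidegrees cannot cancel). Write $W\subset S_k$ for the $p$-dimensional span of $F_1,\ldots,F_p$ inside $S:=\bC[Z_0,\ldots,Z_n]$, and let $J:=(F_1,\ldots,F_p)\subset S$ be the homogeneous ideal they generate. Expanding $\norm Z\norm^2 A(Z,\bar Z)=\sum_{j=0}^n\sum_{i=1}^p|Z_jF_i(Z)|^2$ and using the basic fact, recalled above, that the rank of an SOS equals the dimension of the span of its holomorphic components, one obtains $R=\dim_{\bC}(S_1\cdot W)=\dim J_{k+1}$. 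The upper bound $R\le p(n+1)$ is then immediate, since $S_1\cdot W$ is spanned by the $(n+1)p$ products $Z_jF_i$.

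For the lower bound when $p\le n+1$, I would pass to the quotient ring and write $\dim J_{k+1}=\binom{n+k+1}{k+1}-\dim(S/J)_{k+1}$, noting that $\dim(S/J)_k=\binom{n+k}{k}-p$. Macaulay's theorem bounds the growth of Hilbert functions of standard graded quotients, $\dim(S/J)_{k+1}\le\bigl(\dim(S/J)_k\bigr)^{\langle k\rangle}$, so it remains to write $c:=\binom{n+k}{k}-p$ in its $k$-th Macaulay representation and compute $c^{\langle k\rangle}$. Because $1\le p\le n+1$, this representation is $\binom{n+k-1}{k}+\binom{n+k-2}{k-1}+\cdots+\binom{n+1}{2}+\binom{n+1-p}{1}$ (when $p=n+1$ the last summand drops out; when $k=1$ only the last summand survives). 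A short computation, using $\binom{n+1}{2}+\binom{n+1}{1}=\binom{n+2}{2}$, then gives
\[
\binom{n+k+1}{k+1}-c^{\langle k\rangle}=\binom{n+2}{2}-\binom{n+2-p}{2}=(n+1)p-\frac{p(p-1)}{2},
\]
whence $R=\dim J_{k+1}\ge (n+1)p-p(p-1)/2$. This bound is sharp: taking $F_i=Z_0^{k-1}Z_{i-1}$, $i=1,\ldots,p$, one has $S_1\cdot W=Z_0^{k-1}\cdot\operatorname{span}\{Z_jZ_l:0\le j\le n,\ 0\le l\le p-1\}$, of dimension exactly $(n+1)p-p(p-1)/2$.

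When $p\ge n+1$, I would reduce to the case just treated by restricting to a subspace: pick any $(n+1)$-dimensional $W'\subset W$, still spanned by linearly independent homogeneous forms of degree $k$. Then $J_{k+1}=S_1\cdot W\supseteq S_1\cdot W'$, and the lower bound above, applied with $n+1$ in place of $p$, yields $\dim(S_1\cdot W')\ge(n+1)(n+1)-\binom{n+1}{2}=(n+1)(n+2)/2$; hence $R\ge(n+1)(n+2)/2$.

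The one genuine obstacle is the middle step: invoking the precise form of Macaulay's bound and then carrying out the binomial bookkeeping that pushes the $k$-th Macaulay representation of $\binom{n+k}{k}-p$ through the operator $(\,\cdot\,)^{\langle k\rangle}$. This is elementary, but it has to be handled with some care at the boundary $p=n+1$ (where the lowest term of the representation disappears) and for small $k$, so that the displayed identity holds uniformly. The remaining ingredients --- the reduction via D'Angelo's lemma, the trivial upper bound, and the passage from $p\ge n+1$ to $p=n+1$ --- are routine.
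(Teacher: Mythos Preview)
Your proposal is correct and follows precisely the method the paper attributes to \cite{GrHa13}: the paper does not supply its own proof of this proposition but merely records that it ``was proved in \cite{GrHa13} (Proposition 3) using an estimate by Macaulay on the growth of the Hilbert function of a homogeneous polynomial ideal.'' Your argument---identifying $R=\dim J_{k+1}$, bounding $\dim(S/J)_{k+1}$ via Macaulay's theorem applied to the $k$th Macaulay representation of $\binom{n+k}{k}-p$, and reducing the case $p\ge n+1$ to $p=n+1$ by passing to a subspace---is exactly that approach, carried out in full.
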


\begin{remark}\label{rem:GrHasharp} {\rm We note that both the lower and upper bound in
can be achieved for each $p\leq n$. It is easy to see that the lower bound is achieved with, e.g., $F_i(Z)=Z_i$ for $i=1,\ldots, p$. The upper bound is achieved for "generic" choices of $F_i(Z)$.
}
\end{remark}

A straightforward argument using homogenization of polynomials yields the following:

\begin{theorem}\label{thm:ASOS} Let $a(z,\bar z)$ be a Hermitian real-analytic function near $0$ in $\bC^n$, and assume that $a(z,\bar z)$ is a finite SOS of rank $p$, i.e.,
\begin{equation}\label{e:aSOSdecomp}
a(z,\bar z)=\sum_{i=1}^p |f_i(z)|^2,
\end{equation}
where $f_1(z), \ldots, f_p(z)$ are linearly independent holomorphic functions near $0$. If $p\leq n$, then the rank $r$ of the SOS $\norm z\norm^2a(z,\bar z)$ satisfies
\begin{equation}\label{e:ASOSbound}
\sum_{l=0}^{p-1} (n-l)=np-\frac{p(p-1)}{2}\leq r\leq pn,
\end{equation}
and if $p\geq n$, then $r\geq \max(n(n+1)/2,p)$.
\end{theorem}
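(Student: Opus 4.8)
The plan is to deduce Theorem~\ref{thm:ASOS} from the bihomogeneous case treated in Proposition~\ref{prop:GrHa13} by truncating and homogenizing the $f_i$, and then to transfer the resulting bound back to the affine setting by a dehomogenization count showing that the two ranks differ by at most $p$. The upper bound needs nothing: since $\norm z\norm^2 a(z,\bar z)=\norm z\norm^2\norm f\norm^2=\norm z\otimes f\norm^2$, the SOS $\norm z\norm^2 a$ is generated by the $np$ functions $z_j f_i$ ($1\le j\le n$, $1\le i\le p$), so $r=\dim\operatorname{span}\{z_j f_i\}\le np$, and this holds for every $p$.

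For the lower bound, fix a large integer $N$ and let $f_i^{(N)}$ be the Taylor polynomial of $f_i$ of degree $\le N$. The space of $\bC$-linear relations among $\{z_j f_i^{(N)}\}$ is nonincreasing in $N$ and, being finite-dimensional, stabilizes as $N\to\infty$ to the relations among $\{z_j f_i\}$; likewise for $\{f_i^{(N)}\}$ versus $\{f_i\}$. Hence for $N$ large the $f_1^{(N)},\dots,f_p^{(N)}$ remain linearly independent and $\dim\operatorname{span}\{z_j f_i^{(N)}\}=r$. Now homogenize: put $F_i(Z_0,\dots,Z_n):=Z_0^{N}f_i^{(N)}(Z_1/Z_0,\dots,Z_n/Z_0)$, homogeneous of degree $N$ in $Z=(Z_0,\dots,Z_n)$. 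Homogenization to a fixed degree and dehomogenization ($Z_0=1$) are mutually inverse linear isomorphisms between polynomials of degree $\le N$ in $z$ and homogeneous polynomials of degree $N$ in $Z$, so $F_1,\dots,F_p$ are linearly independent and $A(Z,\bar Z):=\sum_{i=1}^p|F_i(Z)|^2$ is a bihomogeneous Hermitian SOS of rank $p$ in the $n+1$ variables $Z_0,\dots,Z_n$. Since $p\le n\le n+1$, Proposition~\ref{prop:GrHa13} applies and gives, for the rank $R$ of the SOS $\norm Z\norm^2 A=\norm Z\otimes F\norm^2$ (i.e. $R=\dim\operatorname{span}\{Z_j F_i:0\le j\le n\}$), the bound $R\ge(n+1)p-p(p-1)/2$.

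The crucial observation is that $R\le p+r$. Indeed, dehomogenization in degree $N+1$ is a linear isomorphism from the homogeneous polynomials of degree $N+1$ in $Z$ onto the polynomials of degree $\le N+1$ in $z$, and it carries $Z_0 F_i\mapsto f_i^{(N)}$ and $Z_j F_i\mapsto z_j f_i^{(N)}$ for $1\le j\le n$; hence $R=\dim\bigl(\operatorname{span}\{f_i^{(N)}\}+\operatorname{span}\{z_j f_i^{(N)}\}\bigr)\le p+r$ once $N$ is large. Therefore, when $p\le n$,
\[
r\ge R-p\ge (n+1)p-\frac{p(p-1)}{2}-p=np-\frac{p(p-1)}{2},
\]
which is the left inequality in \eqref{e:ASOSbound}. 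For the case $p\ge n$: the bound $r\ge p$ is immediate, since multiplication by $z_1$ is injective on $\mathcal O_n$ and maps $V_f$ into $\operatorname{span}\{z_j f_i\}$; and choosing an $n$-dimensional subspace $V'\subseteq V_f$ with basis $f_1',\dots,f_n'$ and applying the already-established case $p=n$ to the SOS $\sum_{i=1}^n|f_i'|^2$ gives $r\ge\dim\operatorname{span}\{z_j f_i'\}\ge n^2-n(n-1)/2=n(n+1)/2$. Together these give $r\ge\max\bigl(n(n+1)/2,p\bigr)$.

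The only part requiring genuine care is the two-step reduction itself: one must pick a single $N$ that simultaneously preserves linear independence of the $f_i$ and the value of $r$, homogenize to the matching degrees, and—most importantly—verify that the extra generators $f_i^{(N)}$ forced on us by passing to the $n+1$ variables $Z_0,\dots,Z_n$ raise the rank by at most $p$. This last point is exactly what turns the estimate $(n+1)p-p(p-1)/2$ of Proposition~\ref{prop:GrHa13} into the sharp affine estimate $np-p(p-1)/2$; granted it, Theorem~\ref{thm:ASOS} is a formal consequence of Proposition~\ref{prop:GrHa13}.
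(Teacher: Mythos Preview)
Your proof is correct and follows essentially the same route as the paper: truncate the $f_i$ to polynomials, homogenize to $n+1$ variables, invoke Proposition~\ref{prop:GrHa13} to get the rank bound $R\ge(n+1)p-p(p-1)/2$, and then observe that the passage from $R$ to the affine rank $r$ costs at most $p$. The paper phrases this last step as the splitting $\norm Z\norm^2 A=\sum_i|Z_0F_i|^2+\norm\tilde Z\norm^2 A$ followed by dehomogenization, while you combine both into the single observation that dehomogenization sends $\{Z_jF_i:0\le j\le n\}$ onto $\{f_i^{(N)}\}\cup\{z_jf_i^{(N)}\}$; the content is identical.
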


\begin{proof} Let us first assume that $a(z,\bar z)$ is a polynomial of bidegree $(d,d)$ and that \eqref{e:aSOSdecomp} can be achieved with linearly independent polynomials $f_i(z)$ of degree at most $d$. Let us introduce homogeneous coordinates $Z=(Z_0,Z_1,\ldots,Z_n)=(Z_0,\tilde Z)$ and define homogeneous polynomials of degree $d$ by
\begin{equation}\label{e:homoF}
F_i(Z):=Z_0^df_i(\tilde Z/Z_0),\quad i=1,\ldots, p.
\end{equation}
Clearly, the $F_i$ are linearly independent since the $f_i$ are.
It follows that the bihomogeneous Hermitian polynomial
$$
A(Z,\bar Z):=|Z_0|^{2d}a\left(\tilde Z/Z_0, \overline{\tilde Z/Z_0}\right)
$$
then satisfies
$$
A(Z,\bar Z)=\sum_{i=1}^p |F_i(Z)|^2,
$$
and has rank $p$. Let us first assume that $p\leq n$. By Proposition \ref{prop:GrHa13}, the rank $R$ of the SOS $\norm Z\norm^2 A(Z,\bar Z)$ satisfies $R\geq (n+1)p-p(p-1)/2$. Since
$$
\norm Z\norm^2 A(Z,\bar Z)=(|Z_0|^2+\norm\tilde Z\norm^2)A(Z,\bar Z)=
\sum_{i=1}^p|Z_0|^2|F_i(Z)|^2+\norm \tilde Z\norm^2 A(Z,\bar Z)
$$
and $\norm \tilde Z\norm^2 A(Z,\bar Z)$ is also an SOS, it is then clear that the rank $r$ of $\norm \tilde Z\norm^2 A(Z,\bar Z)$ must satisfy
\begin{equation}\label{e:rankbound}
r\geq (n+1)p-p(p-1)/2 - p=np-p(p-1)/2,
\end{equation}
which when $p=n$ reduces to
\begin{equation}\label{e:rankbound'}
r\geq (n+1)n-n(n-1)/2 - n= n(n+1)/2.
\end{equation}
In other words, there are linearly independent homogeneous polynomials $H_i(Z)$ of degree $d+1$ such that
\begin{equation}\label{e:prelim}
\norm \tilde Z\norm^2 A(Z,\bar Z)=\sum_{i=1}^r |H_i(Z)|^2,
\end{equation}
where $r$ satisfies the lower bound \eqref{e:rankbound}. Noting that $f_i(z)=F_i(1,z)$ and $a(z,\bar z)=A\big ((1,z),\overline{(1,z)}\big)$, we conclude by substituting $Z=(1,z)$ in \eqref{e:prelim} that
\begin{equation}\label{e:ASOSdecomp}
\norm z\norm^2 a(z,\bar z)=\sum_{i=1}^r |h_i(z)|^2,
\end{equation}
where $h_i(z)=H_i(1,z)$. The $h_i$ are linearly independent since the $H_i$ are, and therefore the rank of $\norm z\norm^2 a(z,\bar z)$ equals $r$, where $r$ satisfies \eqref{e:rankbound}. Clearly, by construction we have $r\leq pn$ (since $pn$ is the total number of terms obtained when the product $\norm z\norm^2 a(z,\bar z)$ is multiplied out),  proving \eqref{e:ASOSbound} when $p\leq n$. If $p\geq n$, then the rank $r$ will be greater than or equal to the corresponding rank obtained when $a(z,\bar z)$ is replaced by the sum on the right in \eqref{e:aSOSdecomp} truncated after $n$ terms, i.e., $r\geq n(n+1)/2$. The fact that $r\geq p$ is trivial. This establishes the statement of Theorem \ref{thm:ASOS} in the polynomial case.

Next, let $a(z,\bar z)$ be a Hermitian real-analytic function near $0$ in $\bC^n$ satisfying \eqref{e:aSOSdecomp}, where the $f_i$ are linearly independent holomorphic functions near $0$, and let
\begin{equation}\label{e:ASOSdecomp2}
\norm z\norm^2 a(z,\bar z)=\sum_{i=1}^r |h_i(z)|^2,
\end{equation}
be a SOS decomposition of $\norm z\norm^2 a(z,\bar z)$ with the $h_i(z)$ being linearly independent holomorphic functions near $0$. If we truncate the Taylor series of the $f_i(z)$ at degree $d$ and those of the $h_i(z)$ at degree $d+1$, then we obtain a Hermitian polynomial $a^d(z,\bar z)$ of bidegree $(d,d)$ such that
\begin{equation}\label{e:adSOSdecomp}
a^d(z,\bar z)=\sum_{i=1}^p|f^d(z)|^2,\quad \norm z\norm^2 a^d(z,\bar z)=\sum_{i=1}^p|h^{d+1}(z)|^2
\end{equation}
where the $f^d_i(z)$ and $h^{d+1}(z)$ denote the truncated Taylor polynomials of $f_i(z)$ and $h_i(z)$ at degrees $d$ and $d+1$, respectively. Since the sets of holomorpic functions $f_1,\ldots, f_p$ and $h_1,\ldots, h_r$ both are linearly independent, it is clear that the sets of polynomials $f^d_1,\ldots, f^d_p$ and $h^d_1,\ldots, h^d_r$ both are linearly independent for $d$ sufficiently large. In other words, for $d$ sufficiently large, the Hermitian polynomial $a^d(z,\bar z)$ is an SOS of rank $p$ and $\norm z\norm^2 a^d(z,\bar z)$ is an SOS of rank $r$. The conclusion of Theorem \ref{thm:ASOS} now follows from the corresponding statement in the polynomial case, already established above. This concludes the proof of Theorem \ref{thm:ASOS}.
\end{proof}

We are now ready to state and prove a "nonhomogeneous" version of Theorem \ref{thm:ASOS} that will be used in the proofs of Theorems \ref{thm:main0} and \ref{thm:main1} below.

\begin{theorem}\label{thm:nonhomoSOS} Let $f_1(z),\ldots,f_p(z)$ be linearly independent local holomorphic functions vanishing at $0$ in $\bC^n$ such that
\begin{equation}\label{e:nonhomoSOSdecomp}
(1+\norm z\norm^2)(1+\sum_{i=1}^p |f_i(z)|^2)=1+\sum_{i=1}^r|h_i(z)|^2,
\end{equation}
where $h_1(z), \ldots, h_r(z)$ are linearly independent local holomorphic functions near $0$. If $p\leq n$, then the rank $r$ of the SOS $\norm h\norm^2=\sum_{i=1}^r |h_i(z)|^2$ satisfies
\begin{equation}\label{e:nonhomoASOSbound}
n+\sum_{l=0}^{p-1} (n-l)=n(p+1)-\frac{p(p-1)}{2}\leq r\leq pn+n+p=n(p+1)+p,
\end{equation}
and if $p\geq n$, then $r\geq n(n+3)/2$.
\end{theorem}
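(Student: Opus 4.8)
The plan is to expand the product on the left of \eqref{e:nonhomoSOSdecomp}, identify the resulting function as an explicit sum of squares, and then extract the rank from Theorem~\ref{thm:ASOS}. Expanding,
\[
1+\norm h\norm^2=(1+\norm z\norm^2)(1+\norm f\norm^2)=1+\norm z\norm^2+\norm f\norm^2+\norm z\norm^2\norm f\norm^2,
\]
and since $\norm z\norm^2\norm f\norm^2=\sum_{i,j}|z_if_j|^2$, the identity says that $\norm h\norm^2$ is the sum of the squared moduli of the functions in the list $\{z_i\}_{1\le i\le n}$, $\{f_j\}_{1\le j\le p}$, $\{z_if_j\}_{1\le i\le n,\,1\le j\le p}$. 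Since the $h_i$ are linearly independent, reducing this list to a maximal linearly independent subset and applying D'Angelo's lemma \cite{D'Angelobook} forces the two numbers of squares to agree, so
\[
r=\dim_{\bC}\operatorname{span}\bigl(\{z_i\}_{1\le i\le n}\cup\{f_j\}_{1\le j\le p}\cup\{z_if_j\}_{1\le i\le n,\,1\le j\le p}\bigr).
\]
The upper bound $r\le n+p+np=n(p+1)+p$ of \eqref{e:nonhomoASOSbound} is immediate, as this span has at most $n+p+np$ generators.

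For the lower bound I would separate the generators by order of vanishing at $0$. Each $z_if_j$ vanishes to order $\ge 2$ (since $f_j(0)=0$), whereas every nonzero element of $\operatorname{span}\{z_1,\dots,z_n\}$ is a nonzero linear form and hence vanishes to order exactly $1$; therefore $\operatorname{span}\{z_i\}\cap\operatorname{span}\{z_if_j\}=\{0\}$ and
\[
r\ \ge\ n+\dim_{\bC}\operatorname{span}\{z_if_j:1\le i\le n,\ 1\le j\le p\}\ =\ n+\bigl(\text{rank of the SOS }\norm z\norm^2\norm f\norm^2\bigr).
\]
Now $\norm f\norm^2=\sum_{j=1}^p|f_j|^2$ is a finite SOS of rank $p$, so Theorem~\ref{thm:ASOS} (with $a(z,\bar z)=\norm f\norm^2$) gives, when $p\le n$, that this rank is $\ge np-p(p-1)/2$, whence $r\ge n+np-p(p-1)/2=n(p+1)-p(p-1)/2$. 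When $p\ge n$, I run the same argument with $\norm f\norm^2$ replaced by the sub-SOS $\sum_{j=1}^n|f_j|^2$ (of rank $n$, as $f_1,\dots,f_n$ are linearly independent): the case $p=n$ of Theorem~\ref{thm:ASOS} yields a rank $\ge n(n+1)/2$, so $r\ge n+n(n+1)/2=n(n+3)/2$. This proves \eqref{e:nonhomoASOSbound}.

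The substance of the estimate is already carried by Theorem~\ref{thm:ASOS}; the rest is bookkeeping, and I do not expect a real obstacle. The one point needing care is the justification, via D'Angelo's lemma and the order splitting, that $r$ equals the dimension of the span displayed above and that the ``order $\ge 2$'' block $\{z_if_j\}$ spans a space whose dimension is precisely the rank of the SOS $\norm z\norm^2\norm f\norm^2$ — so that Theorem~\ref{thm:ASOS} applies verbatim to it — and is a direct summand of the ``order $\le 1$'' block $\{z_i\}$. It is exactly this splitting that produces the extra additive $n$ (that is, $n(p+1)$ in place of the $np$ of Theorem~\ref{thm:ASOS}) in both bounds. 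One checks, finally, that both bounds are sharp, the lower one with $f_j(z)=z_j$, though this is not required in the sequel.
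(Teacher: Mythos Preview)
Your proof is correct and follows essentially the same route as the paper: expand the product, observe that $V_z$ and $V_{z\otimes f}$ intersect trivially because the $z_if_j$ vanish to order $\ge 2$, and then invoke Theorem~\ref{thm:ASOS} for the lower bound while counting generators for the upper bound. The only cosmetic differences are that the paper phrases the rank identification in terms of the spaces $V_z$, $V_{f\otimes z}$ rather than via D'Angelo's lemma, and it appeals directly to the $p\ge n$ clause of Theorem~\ref{thm:ASOS} instead of truncating to a sub-SOS.
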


\begin{remark}{\rm Both the upper and lower bound in \eqref{e:nonhomoASOSbound} can be achieved by examples similar to those in Remark \ref{rem:GrHasharp}.
}
\end{remark}

\begin{proof} If we write $f=(f_1,\ldots, f_p)$ and $h=(h_1,\ldots, h_r)$, then \eqref{e:nonhomoSOSdecomp} can be written
$$
(1+\norm z\norm^2)(1+\norm f\norm^2)=1+\norm h\norm^2,
$$
which when multiplied out is equivalent to
\begin{equation}\label{e:nonhomo1}
\norm z\norm^2 +\norm f\norm^2+ \norm z\norm^2 \norm f\norm^2=\norm h\norm^2.
\end{equation}
It is clear that the vector spaces $V_z$ and $V_{f\otimes z}$ only intersect at $0$ (since the Taylor series of the $z_lf_i(z)$ have no constant or linear terms by the assumption that $f_i(0)=0$) and therefore the rank of the SOS $\norm z\norm^2 + \norm z\norm^2 \norm f\norm^2$ ($=\dim_\bC V_z\oplus V_{f\otimes Z}$) is $\geq$ the rank of $\norm z\norm^2 \norm f\norm^2$ plus $n$. It then follows immediately from Theorem \ref{thm:ASOS} that if $p\leq n$, then
$$
r\geq np -\frac{p(p-1)}{2} +n =n(p+1)-\frac{p(p-1)}{2},
$$
which is the lower bound in \eqref{e:nonhomoASOSbound}. The lower bound $r\geq n(n+3)/2$ when $p\geq n$ follows from Theorem \ref{thm:ASOS} in the same way. The upper bound in \eqref{e:nonhomoASOSbound} is obtained directly by counting the number of squares on the left in \eqref{e:nonhomo1}. This completes the proof of Theorem \ref{thm:nonhomoSOS}.
\end{proof}

\subsection{Another Sums of Squares Problem} For the proof of Theorem \ref{thm:main1}, we shall also need the following result concerning the rank of powers of $(1+\norm f\norm^2)$:

\begin{proposition}\label{prop:powerSOS} Let $f_1(z),\ldots,f_p(z)$ be linearly independent local holomorphic functions vanishing at $0$ in $\bC^n$. Let $t\in \bZ_+$ and express $(1+\sum_{i=1}^p |f_i(z)|^2)^t$ as follows:
\begin{equation}\label{e:powerSOSdecomp}
(1+\sum_{i=1}^p |f_i(z)|^2)^t=1+\sum_{i=1}^r|h_i(z)|^2,
\end{equation}
where $h_1(z), \ldots, h_r(z)$ are linearly independent local holomorphic functions near $0$. Then the rank $r$ of the SOS $\norm h\norm^2=\sum_{i=1}^r |h_i(z)|^2$ satisfies
\begin{equation}\label{e:powerASOSbound}
tp\leq r\leq \sum_{k=1}^t\binom{p+k-1}{k}.
\end{equation}
\end{proposition}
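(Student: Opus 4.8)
The plan is to reduce the statement to an algebraic fact about the linear span of the monomials in $f_1,\dots,f_p$, and then establish the two bounds separately.

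First I would expand by the binomial theorem, using $\norm f^{\otimes k}\norm^2=\norm f\norm^{2k}$:
\[
\bigl(1+\norm f\norm^2\bigr)^t=1+\sum_{k=1}^t\binom{t}{k}\norm f\norm^{2k}=1+\sum_{k=1}^t\binom{t}{k}\norm f^{\otimes k}\norm^2 .
\]
Comparing this with the given identity $(1+\norm f\norm^2)^t=1+\norm h\norm^2$ and using that two finite SOS agreeing as functions have the same linear span of terms (D'Angelo's lemma), the span of $\{1,h_1,\dots,h_r\}$ coincides with the span of $\{1\}$ together with the components of $f^{\otimes 1},\dots,f^{\otimes t}$. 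Since the $f_i$ vanish at $0$, no nonzero constant lies in the latter span apart from the displayed $1$, so $r=\dim_{\bC}\sum_{k=1}^t V_{f^{\otimes k}}$, where $V_{f^{\otimes k}}$ is the span of the degree-$k$ monomials $f_{i_1}\cdots f_{i_k}$. I would also note that $V_{f^{\otimes k}}$ is the $k$-th power of the subspace $V_f\subset\bC\{z\}$, i.e.\ the span of all products $v_1\cdots v_k$ with $v_i\in V_f$; hence $V_{f^{\otimes k}}$, and therefore $r$, depends only on $V_f$ and not on the chosen basis $f_1,\dots,f_p$ of $V_f$. The upper bound then follows at once: $\dim\sum_k V_{f^{\otimes k}}\le\sum_k\dim V_{f^{\otimes k}}\le\sum_{k=1}^t\binom{p+k-1}{k}$, the last inequality because $V_{f^{\otimes k}}$ is spanned by the $\binom{p+k-1}{k}$ distinct monomials of degree $k$ in $p$ symbols.

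For the lower bound $r\ge tp$ I would pass to initial forms. Using the $\mathfrak m$-adic filtration $V_f\cap\mathfrak m^j$ of $V_f$, one can choose a basis $f_1,\dots,f_p$ of $V_f$ — which is harmless by the basis-independence just noted — whose initial forms $P_i:=\mathrm{in}(f_i)$ (the lowest-degree homogeneous parts, of degrees $o_i:=\mathrm{ord}(f_i)\ge1$) are linearly independent in $\bC[z]$. Relabel so that $o_p=\max_i o_i$. For $1\le k\le t$ and $1\le i\le p$, the product $f_p^{\,k-1}f_i$ lies in $V_{f^{\otimes k}}\subseteq\sum_l V_{f^{\otimes l}}$ and has initial form $P_p^{\,k-1}P_i$, which is nonzero since $\bC[z]$ is an integral domain. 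These $tp$ polynomials $P_p^{\,k-1}P_i$ are linearly independent: two of them, indexed by $(k,i)$ and $(k',i')$, can share the degree $(k-1)o_p+o_i$ only if $k=k'$ (if $k>k'$ then $(k-k')o_p\ge o_p>o_{i'}-o_i$, a contradiction), and for fixed $k$ their independence reduces, after cancelling the common factor $P_p^{\,k-1}$, to the independence of the $P_i$. Finally, linearly independent initial forms force the underlying holomorphic functions to be linearly independent — inspect the lowest-degree homogeneous part of a hypothetical vanishing linear combination — so the $tp$ functions $f_p^{\,k-1}f_i$ are linearly independent and $r\ge tp$.

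I expect the lower bound to be the main obstacle. The tempting shortcut of restricting all the $f_i$ to a generic complex line through $0$ and reducing to one variable does not work, since functions of equal vanishing order (for instance $z_1$ and $z_2$) become proportional on every line. The substitute is the passage to initial forms together with the observation that $r$ is insensitive to the choice of basis of $V_f$, which permits adapting that basis to the $\mathfrak m$-adic filtration; after that, the problem becomes the homogeneous one resolved by the explicit family $P_p^{\,k-1}P_i$. The one auxiliary point to verify is the existence of a basis of $V_f$ with linearly independent initial forms, which one obtains by selecting, for each $j$, elements of $V_f\cap\mathfrak m^j$ whose images form a basis of $(V_f\cap\mathfrak m^j)/(V_f\cap\mathfrak m^{j+1})$.
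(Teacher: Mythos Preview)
Your proof is correct. The setup and the upper bound coincide with the paper's, and both you and the paper replace the original generators by a basis of $V_f$ with linearly independent initial forms; the approaches then diverge on the lower bound. The paper first proves an auxiliary lemma (Lemma~\ref{lem:polyinj}) showing that a substitution $z_j\mapsto\zeta^{a_j}$ with suitable integers $a_j$ is injective on polynomials up to a given degree, uses it to reduce to one variable with leading monomials $\zeta^{s_1},\ldots,\zeta^{s_p}$ ($s_1<\cdots<s_p$), and then argues by induction on $p$ that at least $tp$ distinct exponents $\sum_i\alpha_is_i$, $1\le|\alpha|\le t$, occur. You bypass both the one-variable reduction and the induction by exhibiting directly the $tp$ elements $f_p^{\,k-1}f_i$ and checking that their initial forms $P_p^{\,k-1}P_i$ are linearly independent via the degree inequality $(k-k')o_p\ge o_p>o_{i'}-o_i$. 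Your route is shorter and more self-contained; the paper's monomial substitution is a stronger tool in principle (it collapses the whole problem to arithmetic of a numerical semigroup), but for the bound $r\ge tp$ your direct argument suffices and in fact, specialized to one variable, already yields the $tp$ distinct exponents $(k-1)s_p+s_i$ without any induction.
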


\begin{remark} {\rm The lower bound in \eqref{e:powerASOSbound} is realized by taking $f_i(z)=z_1^i$, and the upper bound can be realized by choosing the $f_i$ to be "suitably spaced" monomials. For instance, to realize the upper bound if $p\leq n$, we can simply take $f_i(z)=z_i$; if $p>n$, then we can take the first $n$ $f_i$'s of this form, and then take subsequent ones to be monomials with an increment in the degrees so that the degrees of monomials up to order $t$ in $f_1,\ldots, f_k$ is lower than the degree of the monomials $f_{k+1},\ldots,f_p$.
}
\end{remark}

\begin{proof} Observe that the rank $r$ is the dimension of the complex vector space $V_F\subset \bC\{z\}$ spanned by the collection $F$ of all monomials $f^\alpha:=f_1^{\alpha_1}\ldots f_p^{\alpha_p}$ with $1\leq |\alpha|\leq t$. The upper bound in \eqref{e:powerASOSbound} is easily seen to hold. The number on the right in \eqref{e:powerASOSbound} is the number of distinct monomials of degree $\leq t$ in $p$ variables, and the rank $r$ can clearly not exceed this. To prove the lower bound, we proceed as follows. First, a moments reflection will convince the reader that, by iteratively replacing the $k$th generator $f_k(z)$ with a suitable linear combination $f_k(z)-\sum_{i=1}^{k-1}c_i f_i(z)$ if necessary, we may assume that the generators are of the form $f_i(z)=q_i(z)+O(|z|^{s_i+1})$ where the $q_i(z)$ are homogeneous polynomials of degree $s_i$ such that $q_1,\ldots, q_p$ are linearly independent. We shall need the following lemma, in which the notation $\bC[w]_{\leq t}$ is used for the space of polynomials in $w=(w_1,\ldots,w_q)$ of degree $\leq t$.

\begin{lemma}\label{lem:polyinj} For any positive integers $t$ and $n$, there are positive integers $a_1,\ldots a_n$ such that the algebra homomorphism $\bC[z]\to \bC[\zeta]$ induced by the map
$$p(z)\mapsto p(\zeta^{a_1},\ldots, \zeta^{a_n})
$$
is injective when restricted as a linear map
$\bC[z]_{\leq t}\to \bC[\zeta]_{\leq t\max(a_1,\ldots a_n)}$.
\end{lemma}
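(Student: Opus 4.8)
The plan is to reduce the statement to a purely combinatorial fact about lattice points in a simplex, and then exhibit the integers $a_1,\dots,a_n$ explicitly. First I would record the basic observation that the algebra homomorphism in question sends a monomial $z^\alpha = z_1^{\alpha_1}\cdots z_n^{\alpha_n}$ to the monomial $\zeta^{\langle a,\alpha\rangle}$, where $a=(a_1,\dots,a_n)$ and $\langle a,\alpha\rangle := \sum_{i=1}^n a_i\alpha_i$. Since the monomials $z^\alpha$ with $|\alpha|\le t$ form a basis of $\bC[z]_{\le t}$, and since distinct powers of $\zeta$ are linearly independent in $\bC[\zeta]$, the restricted linear map is injective if and only if the exponent map $\alpha\mapsto\langle a,\alpha\rangle$ is injective on the finite set $S:=\{\alpha\in\bZ^n_{\ge 0}:|\alpha|\le t\}$. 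Indeed, if that map is injective, then a nonzero polynomial $\sum_\alpha c_\alpha z^\alpha$ of degree $\le t$ is sent to $\sum_\alpha c_\alpha \zeta^{\langle a,\alpha\rangle}$ with pairwise distinct exponents, hence to a nonzero polynomial; the converse is equally clear.

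It then remains to choose $a_1,\dots,a_n$ so that $\alpha\mapsto\langle a,\alpha\rangle$ separates the points of $S$. Here I would simply take $a_i:=(t+1)^{i-1}$ for $i=1,\dots,n$. For $\alpha\in S$ one has $0\le\alpha_i\le|\alpha|\le t<t+1$, so $\langle a,\alpha\rangle=\sum_{i=1}^n\alpha_i(t+1)^{i-1}$ is precisely the base-$(t+1)$ expansion of an integer with digits $\alpha_1,\dots,\alpha_n$; by uniqueness of the base-$(t+1)$ representation of a nonnegative integer, distinct $\alpha\in S$ yield distinct values of $\langle a,\alpha\rangle$, which gives the required injectivity.

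Finally I would verify the degree bound. We have $\max(a_1,\dots,a_n)=(t+1)^{n-1}$, and for any $p\in\bC[z]_{\le t}$ every monomial $z^\alpha$ occurring in $p$ satisfies $|\alpha|\le t$, so its image $\zeta^{\langle a,\alpha\rangle}$ has degree $\langle a,\alpha\rangle\le t\max_i a_i$; hence $p(\zeta^{a_1},\dots,\zeta^{a_n})\in\bC[\zeta]_{\le t\max(a_1,\dots,a_n)}$, as claimed. I do not foresee a genuine obstacle: the only point that warrants care is the equivalence in the first paragraph between linear injectivity of the restricted homomorphism and injectivity of the exponent map on $S$, which hinges on the fact that the image of a monomial is again a monomial, so that no cancellation can occur once the exponents are pairwise distinct. (One could also replace the explicit base-$(t+1)$ choice by a genericity argument — any $a$ outside finitely many hyperplanes $\langle a,\alpha-\alpha'\rangle=0$ works — but the explicit choice is cleaner and makes the degree bound immediate.)
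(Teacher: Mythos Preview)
Your proof is correct and takes a genuinely different route from the paper's. Both arguments begin with the same reduction: since monomials map to monomials, injectivity of the restricted linear map is equivalent to injectivity of the exponent map $\alpha\mapsto\langle a,\alpha\rangle$ on $\{\alpha\in\bZ_{\ge 0}^n:|\alpha|\le t\}$. From there the paper proceeds inductively, first reducing to $n=2$ by iteratively collapsing two variables into one, and then handling $n=2$ by choosing $a_1\neq a_2$ to be primes $\ge t+1$, so that $a_1(\alpha_1-\beta_1)=a_2(\beta_2-\alpha_2)$ with $|\alpha_i-\beta_i|\le t$ forces $\alpha=\beta$. Your argument is more direct: the single explicit choice $a_i=(t+1)^{i-1}$ works for all $n$ at once, because $\langle a,\alpha\rangle$ is then the base-$(t+1)$ integer with digits $\alpha_1,\dots,\alpha_n$, and these digits lie in $\{0,\dots,t\}$. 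This is shorter, avoids the iteration, and yields smaller explicit exponents (of size $(t+1)^{n-1}$ rather than an iterated product of primes, each chosen at least as large as the degree bound accumulated at the previous step). The paper's approach, by contrast, isolates the two-variable case and makes the separation mechanism via a coprimality-plus-size obstruction explicit, which is a slightly different flavor of argument; either one suffices for the application in the proposition that follows.
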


\begin{proof}[Proof of Lemma $\ref{lem:polyinj}$] It suffices to prove the lemma for $n=2$, since this result can then be repeated iteratively to "collapse" two variables to one until the desired map $\bC[z]_{\leq t}\to \bC[\zeta]_{\leq t\max(a_1,\ldots a_n)}$ is obtained. Thus, assume $n=2$. Choose $a_1\neq a_2$ to be prime numbers $\geq t+1$. Then, the induced homomorphism maps the monomial $z^\alpha:=z_1^{\alpha_1}z_2^{\alpha_2}$ to $\zeta^{a_1\alpha_1+a_2\alpha_2}$. Suppose, in order to reach a contradiction, that the linear map $\bC[z]_{\leq t}\to \bC[\zeta]_{\leq t\max(a_1,a_2)}$ is not injective. Then, there must be $|\alpha|\leq t$ and $|\beta|\leq t$ such that $z^\alpha$ and $z^\beta$ get mapped to the same monomial $\zeta^k$; i.e.,
$a_1\alpha_1+a_2\alpha_2=a_1\beta_1+a_2\beta_2$ or, equivalently, $a_1(\alpha_1-\beta_1)=a_2(\beta_2-\alpha_2)$. This is clearly a contradiction since $a_1\neq a_2$ are prime numbers $\geq t+1$ and $|\alpha_1-\beta_1|$, $|\beta_2-\alpha_2|$ are both $\leq t$. This completes the proof.
\end{proof}

We now return to the proof of Proposition \ref{prop:powerSOS}. By Lemma \ref{lem:polyinj}, we can find positive integers $a_1,\ldots, a_n$ such that the one-variable polynomials $\tilde f_i(\zeta)=f_i(\zeta^{a_1},\ldots, \zeta^{a_n})$, for $1=1,2,\ldots, p$, are linearly independent. The rank $r$ in Proposition \ref{prop:powerSOS} will be $\geq$ the dimension $\tilde r$ of the complex vector space $V_{\tilde F}\subset \bC\{z\}$ spanned by the collection $\tilde F$ of all monomials $\tilde f^\alpha:=\tilde f_1^{\alpha_1}\ldots \tilde f_p^{\alpha_p}$ with $1\leq |\alpha|\leq t$. Again, by iteratively replacing the $k$th generator $\tilde f_k(\zeta)$ with a suitable linear combination $\tilde f_k(\zeta)-\sum_{i=1}^{k-1}c_i \tilde f_i(\zeta)$ and then renumbering if necessary, we may assume that $\tilde f_i(\zeta)=b_i\zeta^{s_i}+O(\zeta^{s_1+1})$ where $b_i\neq 0$ and $1\leq s_1<\ldots<s_p$. As a final reduction, we note that the dimension $\tilde r$ is $\geq$ the dimension $r'$ of the complex vector space $V'\subset \bC\{z\}$ spanned by the collection $M$ of all monomials
$$\zeta^{\alpha_1s_1+\ldots +\alpha_ps_p},\quad 1\leq |\alpha|\leq t.
$$
To finish the proof, we shall show that $r'$ satisfies the lower bound in \eqref{e:powerASOSbound}. We shall prove this by induction on $p$. Clearly, if $p=1$, then $r'=t=tp$. Next, assume that the lower bound $r'\geq tp$ has been proved for $p<p_0$. Then, with $p=p_0$, we obtain at least $t(p-1)$ distinct monomials $\zeta^q$ with $q=\alpha_1s_1+\ldots+\alpha_{p-1}s_{p-1}$ such that $|\alpha|=|(\alpha_1,\ldots,\alpha_{p-1},0)|\leq t$. We must prove that we obtain at least $t$ new distinct monomials $\zeta^q$ with $q=\alpha_1s_1+\ldots+\alpha_{p-1}s_{p-1}+\alpha_ps_p$ where $\alpha_p\geq 1$ and $|\alpha|\leq t$. We claim that every $q=ks_{p-1}+(m-k)s_p$, for $k=0,\ldots, t-1$, is such that it cannot be obtained as $q=\alpha_1s_1+\ldots+\alpha_{p-1}s_{p-1}$ with $|\alpha|=|(\alpha_1,\ldots,\alpha_{p-1},0)|\leq m$. To see this, note that since $1\leq s_1<\ldots<s_p$ we have
$\alpha_1s_1+\ldots+\alpha_{p-1}s_{p-1}\leq ts_{p-1}$. Moreover, we have
$$
ks_{p-1}+(t-k)s_p - ts_{p-1} =(t-k)(s_p-s_{p-1})>0,\quad k=0,\ldots, t-1,
$$
which proves the claim, and shows that $r'\geq (t-1)p+t=tp$ also for $p=p_0$. This completes the proof of the proposition.
\end{proof}

\section{Proof of Theorems \ref{thm:main0} and \ref{thm:main1}}\label{s:proofs}

\begin{proof}[Proof of Theorem $\ref{thm:main0}$] Let $f\colon\Omega\to \bC^d\subset \bC\bP^d$  and $h\colon \Omega\to \bC^m\subset \bC\bP^m$ be minimally embedded holomorphic mappings satisfying \eqref{e:hmod0}. Since projective space equipped with the Fubini-Study metric is a homogeneous space, we may assume that $0\in \Omega$ and $f(0)=0$, $h(0)=0$. By \eqref{e:FB} and \eqref{e:FBmod}, we conclude that $\log (1+\norm z\norm^2)+\log(1+\norm f\norm^2)=\log (1+\norm h\norm^2)+\Phi$, where $\Phi$ is a real-valued polyharmonic function, i.e., $\Phi(z,\bar z)=\phi(z)+\overline{\phi(z)}$ near $0$ for some local holomorphic function $\phi(z)$. By comparing the Taylor series of the log-terms (no constant terms and no pure terms in $z$ or $\bar z$) with that of $\Phi$, we conclude that $\Phi\equiv 0$, and hence
$$
\log (1+\norm z\norm^2)+\log(1+\norm f\norm^2)=\log (1+\norm h\norm^2).
$$
By exponentiating this, we obtain the SOS identity
\begin{equation}\label{e:FBSOS1}
(1+\norm z\norm^2)(1+\norm f\norm^2)=(1+\norm h\norm^2).
\end{equation}
Conversely, if $f$ and $h$ satisfy this SOS identity, then \eqref{e:hmod0} holds. It follows that if $f$ is a minimally embedded holomorphic mapping, then there exists a mapping $h$ satisfying \eqref{e:FBSOS1} (simply carry out the multiplication on the left), and elementary linear algebra shows that we may assume (after replacing the $h_i$ obtained by multiplying out the left hand side of \eqref{e:FBSOS1} by a suitable basis for the vector space spanned by these) that $h$ is also minimally embedded. If there are two minimally embedded $h$ and $\tilde h$ that both satisfy \eqref{e:FBSOS1}, then $\norm h\norm^2=\norm\tilde h\norm^2$ and hence, by a lemma of D'Angelo \cite{D'Angelobook}, it follows that $h=U\tilde h$ for some unitary $m\times m$ matrix $U$. To finish the proof of Theorem \ref{thm:main0}, we recall that $f$ and $h$, with $f(0)=0$ and $h(0)=0$, are minimally embedded precisely when their components are linearly independent. Thus, the estimates in (i) and (ii) follow immediately from Theorem \ref{thm:nonhomoSOS}.
\end{proof}

\begin{proof}[Proof of Theorem $\ref{thm:main1}$] As in the proof of Theorem \ref{thm:main0} above, we may assume that $0\in \Omega$ and $f(0)=0$, $h(0)=0$. The same argument as in that proof also shows that the mappings $f$, $h$ satisfy
\begin{equation}\label{e:FBSOS2}
(1+\norm z\norm^2)^b(1+\norm f\norm^2)^c=(1+\norm h\norm^2)^a.
\end{equation}
As noted in Section \ref{s:SOS}, we have $(1+\norm f\norm^2)^c=\norm (1,f)^{\otimes c}\norm^2$. Thus, if the rank of $(1,f)^{\otimes c}$ is $e+1$, then we can find a minimally embedded holomorphic mapping $g\colon \Omega\to \bC^{e}$, with $g(0)=0$, such that
\begin{equation}
(1+\norm f\norm^2)^c=\norm (1,f)^{\otimes c}\norm^2=(1+\norm g\norm^2).
\end{equation}
The estimate \eqref{e:ftensorrk} for $e$ follows from Proposition \ref{prop:powerSOS}.
We may now rewrite \eqref{e:FBSOS2} as follows:
\begin{equation}\label{e:FBSOS3}
(1+\norm z\norm^2)^b(1+\norm g\norm^2)=(1+\norm h\norm^2)^a.
\end{equation}
The estimates in (i) and (ii) in the case $b=1$ now follow immediately by combining the estimates in Theorem \ref{thm:nonhomoSOS} and Proposition \ref{prop:powerSOS}. If $b\geq 2$, we observe that
$$
(1+\norm z\norm^2)^b(1+\norm g\norm^2)=(1+\norm z\norm^2)^{b-1}\left((1+\norm z\norm^2)(1+\norm g\norm^2)\right)
$$
and $(1+\norm z\norm^2)(1+\norm g\norm^2)$ has rank at least $n$ by Theorem \ref{thm:nonhomoSOS}. The estimate in (ii) now follows by again combining the estimates in Theorem \ref{thm:nonhomoSOS} and Proposition \ref{prop:powerSOS}. This completes the proof of Theorem \ref{thm:main1}.
\end{proof}

\section{The case where the mapping $f$ is rational; Examples}\label{s:polyf}

Let us examine closer the case where $f\colon \Omega\to \bC\bP^d$ is a rational mapping in Theorem \ref{thm:main1}. Clearly, by the uniqueness property of the mapping $h$ (up to linear transformations as explained in the proof of Theorem \ref{thm:main0}), it follows that $h\colon \Omega\to \bC\bP^m$ is also rational. Let us choose homogeneous coordinates $Z=[Z_0:Z_1:\ldots :Z_n]$ in $\bC\bP^n$. The Fubini-Study metric then has the associated $(1,1)$-form
\begin{equation}\label{e:FBhomo}
\omega_n=\frac{\sqrt{-1}}{2\pi}\partial \bar\partial \log \left(\sum_{i=0}^n|Z_i|^2\right)=\frac{\sqrt{-1}}{2\pi}\partial \bar\partial \log \norm Z\norm^2.
\end{equation}
We shall denote the rational mappings $\bC\bP^n\to \bC\bP^d$ and $\bC\bP^n\to \bC\bP^m$ corresponding to $f$ and $h$ by $F$ and $H$, respectively. Thus, in homogeneous coordinates on the target projective spaces, we have $F(Z)=[F_0(Z):\ldots: F_d(Z)]$ and $H(Z)=[H_0(Z):\ldots:H_m(Z)]$, where $F_i(Z)$ and $H_j(Z)$ are homogeneous polynomials, and the assumptions in Theorem \ref{thm:main1} are equivalent to the identity
\begin{equation}\label{e:polyid}
(\norm Z\norm^2)^b(\norm F\norm^2)^c=(\norm H\norm^2)^a.
\end{equation}
Let us proceed under the assumption that $a\geq 2$.
By complexifying \eqref{e:polyid}, i.e.\ replacing $\bar Z$ by an independent complex variable $\chi$ and using the notation $\bar \phi(\chi):=\overline{\phi(\bar\chi)}$, we obtain
\begin{equation}\label{e:polyidcplx0}
\left(\sum_{j=0}^n Z_j\chi_j\right)^b\left(\sum_{i=0}^dF_i(Z)\bar F_i(\chi)\right)^c=
\left(\sum_{k=0}^mH_k(Z)\bar H_k(\chi)\right)^a.
\end{equation}
Since the polynomial $\sum_{j=0}^n Z_j\chi_j$ is irreducible, the identity \eqref{e:polyidcplx0} implies that $\sum_{j=0}^n Z_j\chi_j$ divides $\sum_{k=0}^mH_k(Z)\bar H_k(\chi)$, and we conclude that there exists a (homogeneous) polynomial $R(Z,\chi)$ such that
\begin{equation}\label{e:polyidcplx1}
\left(\sum_{j=0}^n Z_j\chi_j\right)R(Z,\chi)=
\sum_{k=0}^mH_k(Z)\bar H_k(\chi),
\end{equation}
or
\begin{equation}\label{e:polyid2}
\norm Z\norm^2 R(Z,\bar Z)=
\norm H\norm^2.
\end{equation}
Substituting this in \eqref{e:polyid}, we obtain
\begin{equation}\label{e:polyid3}
\left(\norm Z\norm^2\right)^b\left(\norm F\norm^2 \right)^c=
\left(\norm Z\norm^2 \right)^a R(Z,\bar Z)^a.
\end{equation}
From \eqref{e:polyid3}, we can conclude that
the Hermitian polynomial $R(z,\bar z)$ belongs to various "positivity classes" introduced by D'Angelo and Varolin (see \cite{JPDVar04}; see also \cite{JPD11}). For instance, if $b\geq a$, then it follows that $R(Z,\bar Z)^a$ is an SOS; {\it however}, $R(z,\bar z)$ may not be an SOS itself in general. Thus, in order to use \eqref{e:polyid3} instead of \eqref{e:polyid} to estimate the dimension $m$, we would need to solve the SOS Problem in Section \ref{s:SOS} in its general form. Nevertheless, Huang's Lemma (described in Section \ref{s:SOS} above) applied to \eqref{e:polyid2} implies that $m\geq n$, which is a stronger lower bound in the first gap than that provided by Theorem \ref{thm:main1} for general, not necessarily rational mappings (see the discussion following the statement of Theorem \ref{thm:main1} and Remark \ref{rem:bad}). When $R(z,\bar z)$ is not an SOS, then the modification of $\omega_n$ obtained by applying $(\sqrt{-1}/2\pi)\bar\partial\partial$ to $\log$ of the left side of \eqref{e:polyid2} arises from the pullback of a mapping into a projective space with an indefinite Fubini-Study type "metric" (as in \cite{Calabi53}).

We should remark that for local holomorphic mappings $f$ and $h$, reducing the identity $$
(1+\norm z\norm^2)^b(1+\norm f\norm^2)^c=(1+\norm h\norm^2)^a
$$
to an identity of the form
\begin{equation}\label{e:notuseful}
(1+\norm z\norm^2)(1+Q(z,\bar z))=1+\norm h\norm^2
\end{equation}
is not useful, unless $Q(z,\bar z)$ itself is an SOS. Any mapping $h$ has a local analytic identity of this form with
$$
1+Q(z,\bar z)=\frac{1+\norm h\norm^2}{1+\norm z\norm^2}.
$$

We conclude this section by giving an example (taken from \cite{JPDVar04} and \cite{JPD11}) illustrating that even in the rational (polynomial) case, the situation in Theorem \ref{thm:main1} cannot be reduced to a situation covered by Theorem \ref{thm:main0}.

\begin{example}\label{ex:1}
Let $n=1$ and
\begin{equation}
R_\lambda(z,\bar z):=(1+|z|^2)^4-\lambda|z|^4=1+4|z|^2+(6-\lambda)|z|^4+4|z|^6+|z|^8.
\end{equation}
We observe that $R_\lambda(z,\bar )=1+Q_\lambda(z,\bar z)$ where $Q_\lambda(z,\bar z)$ is an SOS if and only if $\lambda\leq 6$. A straightforward calculation shows that
\begin{equation}
(1+|z|^2)R_\lambda(z,\bar z)=1+5|z|^2+(10-\lambda)|z|^4+(10-\lambda)|z|^6+5|z|^8+|z|^{10},
\end{equation}
which is of the form $1+\norm h\norm^2$ if and only $\lambda \leq 10$. Another calculation shows that
\begin{multline}\label{e:Rsq}
R_\lambda(z,\bar z)^2=1+1+8|z|^2+2(14-\lambda)|z|^4+8(7-\lambda)|z|^6+\left((6-\lambda)^2+34\right)|z|^8
\\+8(7-\lambda)|z|^{10}
+2(14-\lambda)|z^{12}+9|z|^{14}+|z|^{16},
\end{multline}
which is of the form $1+\norm f\norm^2$ if and only $\lambda \leq 7$. (According to \cite{JPDVar04} and \cite{JPD11}, there exists $k\geq 2$ such that $R_\lambda(z,\bar z)^k= 1+$ an SOS if and only if $\lambda<8$.) Thus, if we choose $\lambda=7$ and denote $R_7$ by $R$, then we have
\begin{equation}\label{e:Rprod}
(1+|z|^2)R(z,\bar z)=1+\norm h\norm^2,
\end{equation}
with $h=(h_1,\ldots, h_m)$ and $m=5$, but $R(z,\bar z)$ is not an SOS. However, if we square \eqref{e:Rprod} and use \eqref{e:Rsq}, then we obtain
\begin{equation}
(1+|z|^2)^2(1+\norm f\norm^2)=(1+\norm h\norm^2)^2,
\end{equation}
with $h=(h_1,\ldots, h_m)$ and $m=5$, $f=(f_1,\ldots,f_d)$ and $d=6$. Thus, this is an example where the conditions in Theorem \ref{thm:main1} are satisfied (with $a=b=2$, $c=1$, $e=d=6$), but which cannot be reduced to a situation covered by Theorem \ref{thm:main0}. Of course, in this example the lower bound for $m$ (which is 5) provided by Theorem \ref{thm:main1} (ii) (or the one provided by Huang's Lemma as above) is a poor estimate, as it reduces to $m\geq 1$ (which is a trivial bound) when $n=1$. We do note, however, that $m<d=e$ in contrast to the situation in Theorem \ref{thm:main0}.

\end{example}


\def\cprime{$'$}

\end{document}